\newtheorem{thm}{Theorem}[section]
\newtheorem{cor}[thm]{Corollary}
\newtheorem{clm}[thm]{Claim}
\newtheorem{ques}[thm]{Question}
\newtheorem{example}[thm]{Example}
\numberwithin{equation}{section}
\newcommand{\ir}{i\gamma}
\newcommand{\mes}{\textrm{mes}}
\newcommand{\I}{\mathcal{I}}
\newcommand{\NC}{\mathcal{NC}}
\begin{document}

\title{Collapsibility of non-cover complexes of graphs}

\author{Ilkyoo Choi\footnote{Ilkyoo Choi was supported by the Basic Science Research Program through the National Research Foundation of Korea (NRF) funded by the Ministry of Education (NRF-2018R1D1A1B07043049), and also by the Hankuk University of Foreign Studies Research Fund.}\\[1.5ex]
\small Department of Mathematics\\
\small Hankuk University of Foreign Studies  \\
\small Yongin, Republic of Korea \\
\small\tt ilkyoo@hufs.ac.kr \\[2ex]
\and
Jinha Kim
\\[1.5ex]
\small Department of Mathematics\\
\small Seoul National University \\
\small Seoul, Republic of Korea \\
\small\tt kjh1210@snu.ac.kr \\[2ex]
\and
Boram Park\footnote{Boram Park work  supported by Basic Science Research Program through the National Research Foundation of Korea (NRF) funded by the Ministry of Science, ICT and Future Planning (NRF-2018R1C1B6003577).} \\[1.5ex]
\small Department of Mathematics\\
\small Ajou University\\
\small Suwon, Republic of Korea \\
\small\tt borampark@ajou.ac.kr}

\date\today

\maketitle

\begin{abstract}
Given a graph $G$, the {\em non-cover complex} of $G$ is the combinatorial Alexander dual of the independence complex of $G$.
Aharoni asked if the non-cover complex of a graph $G$ without isolated vertices is $(|V(G)|-\ir(G)-1)$-collapsible where $\ir(G)$ denotes the independent domination number of $G$.
Extending a result by the second author, who verified Aharoni's question in the affirmative for chordal graphs, we prove that the answer to the question is yes for all graphs.
Namely, we show that for a graph $G$, the non-cover complex of a graph $G$ is $(|V(G)|-\ir(G)-1)$-collapsible.
\end{abstract}

\section{Introduction}\label{sec:intro}
We consider only finite simple graphs.
For simplicity, define $[n]:=\{1, \ldots, n\}$. 
Given a graph $G$, let $V(G)$ and $E(G)$ denote the vertex set and edge set, respectively, of $G$. 
An \textit{independent set} of a graph is a subset of the vertices that induces no edge.
Given a graph $G$, a \textit{cover} of $G$ is a subset $W$ of the vertices such that $V(G)\setminus W$ is an independent set of $G$; in other words, $W$ contains an endpoint of every edge of $G$. 
A subset of the vertices that is not a cover is called a \textit{non-cover}.

Given a graph $G$, the {\em independence complex} $\I(G)$ of $G$ is a simplicial complex defined as
\[\I(G) := \{I \subseteq V(G): I\text{ is an independent set of }G\}.\]
The {\em combinatorial Alexander dual} $D(\I(G))$ of $\I(G)$ is defined as
\[D(\I(G)) := \{W \subseteq V(G): V(G) \setminus W \notin \I(G)\}, \]
and is the simplicial complex of non-covers of $G$;
this complex, denoted $\NC(G)$, is also known as the {\em non-cover complex}  of $G$.
In other words, a set $W \subseteq V(G)$ is a member of $\NC(G)$ if and only if $W$ is a non-cover of $G$.
Note that the non-cover complex of a graph with no edges is the void complex.
If a graph with an isolated vertex $v$ has an edge, then the non-cover complex is a cone with apex $v$, and thus it is contractible. 
However, in general, it is not easy to determine the non-cover complex of an arbitrary graph.
Our main result connects the collapsibility of the non-cover complex and the independent domination number of the associated graph. 
We now introduce these two parameters. 

\smallskip

For a graph $G$ and $A,D \subseteq V(G)$, if each $v\in A$ has a neighbor in $D$, then we say $D$ {\em dominates} $A$. 
We use $\gamma(G;A)$ to denote the minimum size of a set that dominates $A$.
The \textit{independent domination number} $\ir(G)$ of $G$ is defined as
\[
\ir(G):=\max \{\gamma(G; I) : I \text{ is an independent set of }G\}.
\]
By convention, we let $\ir(G) = \infty$ when $G$ contains an isolated vertex.

\smallskip

For a finite simplicial complex $X$, a face $\sigma \in X$ is \textit{free} if there is a unique facet of $X$ containing $\sigma$.
An \textit{elementary $d$-collapse} of $X$ is the operation of deleting all faces containing a free face of size at most $d$. 
We say $X$ is $d$-\textit{collapsible} if we can obtain the void complex from $X$ by a finite sequence of elementary $d$-collapses.
The notion of $d$-collapsibility of simplicial complexes was introduced in \cite{Weg75} and has been widely studied ever since \cite{Lew,MT09}. 
An easy observation is that an elementary $d$-collapse does not affect the (non-)vanishing property of homology groups of dimension at least $d$.
See also \cite{Kalai,KM05} for applications regarding Helly-type theorems.
In addition, the topological colorful Helly theorem~\cite{KM05} tells us that given a graph $G$ with a $d$-collapsible non-cover complex, 
for every $d+1$ covers $W_1,\ldots,W_{d+1}$ of $G$, 
there is a cover $W=\{w_{i_1},\dots,w_{i_k}\}$ of $G$ such that $1 \leq i_1 < \cdots < i_k \leq d+1$ and $w_{i_j} \in W_{i_j}$ for each $j \in [k]$; 
the set $W$ is also known as a {\em rainbow cover} of $G$ for $W_1,\dots,W_{d+1}$.

 \smallskip

The collapsibility of non-cover complexes of graphs is related to the {topological connectivity} of independence complexes.
For a simplicial complex $X$, let $\eta(X)$ be the maximum integer $k$ such that $\tilde{H}_j(X)=0$ for all $-1 \leq j \leq k-2$.
(We use $\tilde{H}_i(X)$ to denote the $i$th reduced homology group of $X$ over $\mathbb{Q}$.)
Here, $\tilde{H}_{-1}(X) = 0$ if and only if $X$ is non-empty.
In \cite{Chudnovsky, AH00} (see also \cite{Mes01, Mes03}), it was shown that large independence domination numbers of graphs gives high connectivity of the independence complexes of graphs, in particular, Theorem~\ref{lower-bd}.
Research in this direction was motivated by a topological version of Hall's marriage theorem~\cite{AH00}.

\begin{thm}[\cite{Chudnovsky, AH00}]\label{lower-bd}
For every graph $G$, $\eta(\I(G)) \geq \ir(G)$.
\end{thm}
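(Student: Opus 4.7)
I would prove the theorem by induction on $|V(G)|$, using a Mayer--Vietoris splitting together with a private-neighbor argument. The base case $V(G) = \emptyset$ is immediate ($\I(G) = \{\emptyset\}$ and $\ir(G) = 0$). If $G$ has an isolated vertex, then $\ir(G) = \infty$ by convention while $\I(G)$ is a cone, hence contractible, so $\eta(\I(G)) = \infty$ and the inequality holds vacuously. Thus we may assume $G$ has no isolated vertex and set $k = \ir(G)$, witnessed by an independent set $I$ with $\gamma(G; I) = k$.

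The key tool is the decomposition $\I(G) = \I(G-v) \cup (\{v\} * \I(G-N[v]))$ available at any vertex $v \in V(G)$; the intersection is $\I(G - N[v])$, and the second piece is a cone on $v$. Feeding this into the Mayer--Vietoris long exact sequence and using the vanishing of reduced homology on the cone yields the standard inequality
\[
\eta(\I(G)) \;\geq\; \min\bigl(\eta(\I(G-v)),\; \eta(\I(G-N[v])) + 1\bigr).
\]
Consequently it suffices to find a vertex $v$ with $\ir(G-v) \geq k$ and $\ir(G-N[v]) \geq k-1$, because the inductive hypothesis then gives $\eta(\I(G)) \geq \min(k, (k-1)+1) = k$, as desired.

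To produce such a $v$, I would fix a minimum dominator $D \subseteq V(G)$ of $I$ with $|D| = k$. By minimality, each $d \in D$ admits a \emph{private neighbor} $p_d \in I$ with $N(p_d) \cap D = \{d\}$; in particular $D \cap I = \emptyset$, since otherwise any $d \in D \cap I$ would be joined to its own private neighbor in the independent set $I$. Pick $v$ to be any element of $D$. Since $v \notin I$, every set $D'$ dominating $I$ in $G-v$ also dominates $I$ in $G$, so $|D'| \geq k$, giving $\ir(G-v) \geq \gamma(G-v;I) \geq k$. For the second bound, set $I' := I \setminus N[v]$; any $D' \subseteq V(G - N[v])$ dominating $I'$ in $G - N[v]$ has the property that $D' \cup \{v\}$ dominates $I$ in $G$ (the vertex $v$ takes care of $I \cap N[v]$ because $v \notin I$), whence $|D'| \geq k-1$ and $\ir(G - N[v]) \geq k-1$. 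The degenerate case $I' = \emptyset$ forces $I \subseteq N[v]$, hence $k \leq 1$, and the bound $\ir(G-N[v]) \geq 0$ is then vacuous.

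The step I expect to be the main obstacle is precisely the choice of $v$: the inductive step demands a single vertex whose deletion and whose closed-neighborhood removal both preserve the independent-domination quantity (up to a shift of one in the second case), and the private-neighbor construction is what guarantees such a vertex exists inside any minimum dominator of a worst independent set. A secondary technicality is that $G-v$ or $G-N[v]$ may acquire isolated vertices, but then by the stated convention $\ir$ becomes $\infty$ and the corresponding $\I(\cdot)$ is a cone, so the inductive comparison goes through unchanged.
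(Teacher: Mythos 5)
Your proof is correct. Note that the paper does not prove Theorem~\ref{lower-bd} at all --- it is quoted from \cite{Chudnovsky, AH00} --- so there is no in-paper argument to compare against; what you have written is essentially the standard proof from those references (and from Meshulam's work): the cover $\I(G)=\I(G-v)\cup(\{v\}*\I(G-N[v]))$, the Mayer--Vietoris bound $\eta(\I(G))\ge\min\bigl(\eta(\I(G-v)),\eta(\I(G-N[v]))+1\bigr)$, and induction on the order of the graph, with the isolated-vertex cases absorbed by the cone/contractibility observation. One small remark: the private-neighbor machinery is more than you need. All that your two domination bounds use is that $v\notin I$ (deleting such a $v$ cannot shrink $\gamma(\cdot;I)$, and $v$ by itself dominates $I\cap N[v]=I\cap N(v)$), and such a vertex exists whenever $G$ has an edge; so you may simply pick any $v\in V(G)\setminus I$ rather than locating one inside a minimum dominating set. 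The rest of the argument, including the degenerate case $I\setminus N[v]=\emptyset$ and the treatment of newly created isolated vertices via the convention $\ir=\infty$, is handled correctly.
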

As a consequence of Theorem~\ref{lower-bd} and the Alexander duality theorem\footnote{\textbf{Alexander duality theorem}(\cite{BBM97}) \
Let $X$ be a simplicial complex on the vertex set $V$.
If $V \notin X$, then for all $-1 \leq i \leq |V|-2$, 
$\tilde{H}_i(D(X)) \cong \tilde{H}_{|V|-i-3}(X)$.} (see \cite{BBM97})
we obtain that for every graph $G$ with at least one edge, the reduced homology group of the non-cover complex of $G$ satisfies
\begin{align}\label{noncover homology}
\tilde{H}_i(\NC(G))=0 \text{ for all } i \geq |V(G)|-\ir(G)-1.
\end{align}

Aharoni~\cite{Aharoni} asked the following question:

\begin{ques}[\cite{Aharoni}]\label{ques-coll}
If $G$ is a graph with no isolated vertices, then is it true that the non-cover complex of $G$ is $(|V(G)|-\ir(G)-1)$-collapsible?
\end{ques}

The verification of Question~\ref{ques-coll} for all graphs implies not only the property in \eqref{noncover homology}, but also the stronger property that for every $W \subseteq V(G)$, the reduced homology group of the subcomplex $\NC(G)[W]$ induced by $W$ satisfies
\[\tilde{H}_i(\NC(G)[W])=0 \text{ for all } i \geq |V(G)| - \ir(G)-1.\]

In \cite{Kim}, the second author of this paper verified Question~\ref{ques-coll} for chordal graphs.
We extend this result by resolving Question~\ref{ques-coll} completely in the affirmative. 

\begin{thm}\label{thm:main:collapse}
For a graph $G$ without isolated vertices, the non-cover complex of $G$ is $(|V(G)|-\ir(G)-1)$-collapsible.
\end{thm}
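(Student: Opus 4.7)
The plan is to argue by strong induction on $|V(G)|$, with the inductive step effecting a sequence of elementary $(n-\gamma-1)$-collapses that reduce $\NC(G)$ to a complex on which the inductive hypothesis can be applied; here $n=|V(G)|$, $\gamma=\ir(G)$, and $d:=n-\gamma-1$. The first point to pin down is the shape of the relevant free faces. Since the facets of $\NC(G)$ are precisely the sets $V(G)\setminus\{x,y\}$ for $xy\in E(G)$, a face $\sigma$ is free in $\NC(G)$ if and only if the induced subgraph $G[V(G)\setminus\sigma]$ contains exactly one edge. Writing $U:=V(G)\setminus\sigma$, the size bound $|\sigma|\le d$ is equivalent to $|U|\ge\gamma+1$, which in turn is equivalent to a decomposition $U=\{u,v\}\cup I$ with $uv\in E(G)$ and $I$ an independent set of size at least $\gamma-1$ disjoint from $N_G[\{u,v\}]$. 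Every elementary collapse in the sequence must be certified by exhibiting such a decomposition.

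The engine of the induction is a careful choice of vertex. I would fix an independent set $I^\star$ witnessing $\ir(G)=\gamma$ (so that $|I^\star|\ge\gamma$ and $\gamma(G;I^\star)=\gamma$) and single out a vertex $v^\star$ whose deletion is compatible with $I^\star$: for example a vertex of $V(G)\setminus I^\star$ minimizing $|N(v^\star)\cap I^\star|$, or one chosen to ensure $\ir(G-v^\star)\ge\gamma-1$, which is exactly the inequality that makes the inductive hypothesis applied to $G-v^\star$ output a bound strong enough to imply $d$-collapsibility. The plan is to perform all of the $d$-collapses that eliminate every face of $\NC(G)$ not sitting inside the star of $v^\star$, reducing the residual complex to a subcomplex isomorphic to the cone $v^\star\ast\NC(G-v^\star)$, and then finish by the inductive hypothesis applied to $G-v^\star$. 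Concretely, for each edge $e=v^\star u$ incident to $v^\star$ the corresponding free face $\sigma_e\subseteq V(G)\setminus\{v^\star,u\}$ will be certified by exhibiting an independent set $J_u$ of size at least $\gamma-1$ avoiding $N_G[\{v^\star,u\}]$, and the collapses will be executed in an order that keeps each $\sigma_e$ free at the moment it is used.

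The technical heart of the argument lies precisely in the simultaneous production and ordering of the $J_u$'s. For a single edge $e=v^\star u$, a free face of the required size can fail to exist if $N_G[\{v^\star,u\}]$ meets $I^\star$ too heavily; and even when all the $J_u$'s exist individually, an earlier elementary collapse can introduce new, smaller facets in the residual complex and spoil the freeness of a later $\sigma_{e'}$. Overcoming this should require a compatibility lemma that jointly controls the intersection patterns of the sets $N_G[\{v^\star,u\}]$ with $I^\star$ as $u$ ranges over $N(v^\star)$, the dominating structure of $I^\star$ inside $G-v^\star$, and the precise order of the collapses. Once this bookkeeping is in place the induction closes cleanly; the base case $|V(G)|=2$ is immediate.
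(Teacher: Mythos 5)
Your outline takes a route entirely different from the paper's (which applies the Matou\v{s}ek--Tancer minimal-exclusion-sequence criterion to a carefully ordered list of facets together with a Hall-type defect inequality, and involves no induction and no explicit collapsing sequence), but as written it is a plan rather than a proof, and two of its steps fail as stated. First, the per-edge free faces $\sigma_e$ of size at most $d$ need not exist: for $e=v^\star u$ you need an independent set of size at least $\gamma-1$ disjoint from $N(\{v^\star,u\})\cup\{v^\star,u\}$, and this can fail for particular edges. For example, let $V(G)=\{u,v,a_1,\dots,a_k,b_1,\dots,b_k\}$ with edges $uv$, $ua_i$, $vb_i$, $a_ib_i$ for $i\in[k]$, $k\ge 2$. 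Then $\{u,v\}$ dominates $V(G)$ while $\{a_1,b_2\}$ needs two dominators, so $\ir(G)=2$ and $d=2k-1$; but $N(\{u,v\})\cup\{u,v\}=V(G)$, so the only face lying under the facet $V(G)\setminus\{u,v\}$ alone is that facet itself, of size $2k>d$. Hence whether a vertex $v^\star$ admitting all the required $J_u$'s exists is precisely what needs proof, and neither of your proposed selection rules is shown to supply one. Second, and independently of the choice of $v^\star$: one elementary collapse per edge at $v^\star$ cannot clear the complement of the star of $v^\star$. The collapse at $\sigma_e$ deletes only the faces between $\sigma_e$ and $V(G)\setminus\{v^\star,u\}$; if $w$ is any vertex outside $J_u$ and outside $N(\{v^\star,u\})\cup\{v^\star,u\}$, then $\tau=V(G)\setminus\{v^\star,u,w\}$ lies outside the star of $v^\star$ (its complement induces only the edge $v^\star u$) yet contains no $\sigma_{e'}$, so it survives every one of your collapses, and the residual complex is strictly larger than $v^\star\ast\NC(G-v^\star)$.

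So the ``compatibility lemma'' you defer is not bookkeeping; it is the entire mathematical content, and even its statement would have to change, since the scheme needs far more elementary collapses than one per edge at $v^\star$ (roughly, one for each minimal face whose complement induces only edges through $v^\star$), each certified to be free and of admissible size at the moment it is used. You would also need to handle the case where $G-v^\star$ acquires isolated vertices, where $\ir(G-v^\star)=\infty$ and the inductive hypothesis gives nothing. Unless these points are resolved the argument does not establish the theorem. It is worth seeing how the paper sidesteps all of this: it fixes one global ordering of the facets (complements of edges), putting the edges inside $V(G)\setminus I$ first for a maximal independent set $I$ achieving $\ir(G)$, and bounds the number of distinct vertices in the minimal exclusion sequence of an arbitrary face by $|V(G)|-\ir(G)-1$ using the inequality $|S|-|N(S)\cap I|\ge\ir(G)-|I|$ for all $S\subseteq V(G)\setminus I$.
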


The main tool for our proof of Theorem~\ref{thm:main:collapse} is minimal exclusion sequences~\cite{MT09} (see also~\cite{Lew}), which we review in section~2 along with the proof of Theorem~\ref{thm:main:collapse}. 
We end the paper by providing some remarks in section~3.

\section{Proof}

\subsection{Minimal exclusion sequences}
In this subsection, we review a result in \cite{MT09}, which will play a key role in the proof.  

For a simplicial complex $X$ on the vertex set $[n]$,
take a linear ordering $\prec: \sigma_1,\ldots,\sigma_m$ of the facets of $X$.
Given a face $\sigma$ of $X$, we define the \textit{minimal exclusion sequence} $\mes_{\prec}(\sigma)$ as follows.
Let $i$ denote the smallest index such that $\sigma \subseteq \sigma_i$.
If $i=1$, then $\mes_{\prec}(\sigma)$ is the null sequence.
If $i\ge 2$, then $\mes_{\prec}(\sigma)=(v_1,\ldots, v_{i-1})$ is a finite sequence of length $i-1$ such that
$v_1=\min (\sigma\setminus \sigma_1)$ and  for each $k\in\{2, \ldots, i-1\}$, 
\[v_k=\begin{cases}
   \min(\{v_1,\dots,v_{k-1}\}\cap (\sigma \setminus \sigma_k)) & \text{if } \{v_1,\dots,v_{k-1}\}\cap (\sigma \setminus \sigma_k)\neq\emptyset,\\
   \min (\sigma\setminus \sigma_k) & \text{otherwise.}
\end{cases} \]
Let $M_{\prec}(\sigma)$ denote the set of vertices appearing in $\mes_{\prec}(\sigma)$, and define
 \[d_{\prec}(X):=\max_{\sigma \in X}|M_{\prec}(\sigma)|.\]
The following was proved in \cite{MT09} (see also \cite{Lew}).
\begin{thm}[\cite{MT09}]\label{thm:minimal_exclusion}
 If $\prec$ is a linear ordering of the facets of $X$, then $X$ is $d_{\prec}(X)$-collapsible.
\end{thm}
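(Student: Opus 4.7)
The strategy is to apply Theorem~\ref{thm:minimal_exclusion}: writing $n = |V(G)|$, it is enough to exhibit a linear ordering $\prec$ of the facets of $\NC(G)$ with $d_\prec(\NC(G)) \le n - \ir(G) - 1$. As a preliminary, one verifies that the facets of $\NC(G)$ are exactly the sets $V(G)\setminus e$ for edges $e \in E(G)$: a maximal non-cover $W$ must have $V(G)\setminus W$ of size exactly $2$ and equal to an edge, since otherwise some interior vertex of $V(G)\setminus W$ could be moved into $W$ while preserving a surviving edge outside. Thus choosing $\prec$ amounts to choosing a linear order on $E(G)$.

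I would construct $\prec$ by first fixing a total order $v_1 < v_2 < \cdots < v_n$ on $V(G)$ and then ordering edges lexicographically (write each edge $\{u,v\}$ with $u<v$ and compare first by the smaller endpoint, then by the larger). The vertex ordering is where the work lies: for chordal graphs, \cite{Kim} uses a perfect elimination ordering, but there is no direct analogue for general graphs. The natural candidate is to build $v_1, \ldots, v_n$ recursively, at each stage picking a vertex that plays a favorable role with respect to a minimum independent dominator of an induced subgraph of $G$, so that the construction meshes with the minimality rule inside $\mes_\prec$.

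Given $\sigma \in \NC(G)$, let $e_i$ be the $\prec$-first edge with $e_i \cap \sigma = \emptyset$. Every earlier edge meets $\sigma$, so $M_\prec(\sigma) \subseteq \sigma$ is the output of the greedy vertex-cover algorithm on $\{e_1, \ldots, e_{i-1}\}$. The target inequality $|M_\prec(\sigma)| \le n - \ir(G) - 1$ rearranges to $|V(G)\setminus M_\prec(\sigma)| \ge \ir(G) + 1$. The plan is to exhibit, from $\sigma$ and the ordering, an independent set $I$ of $G$ with $\gamma(G;I) = \ir(G)$ such that some proper subset of $V(G)\setminus M_\prec(\sigma)$ dominates $I$. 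Then $\ir(G) = \gamma(G;I) < |V(G)\setminus M_\prec(\sigma)|$, giving the bound. A natural source for $I$ is a maximal independent set of $G$ extending $e_i$ or chosen from $V(G)\setminus M_\prec(\sigma)$; the vertex ordering must be set up so that $M_\prec(\sigma)$ is dense enough to miss some needed neighbor of $I$ only after covering up all the $e_k$'s.

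The main obstacle is the coupled design of the vertex ordering and the choice of $I$. The MES rule's repeated $\min$-operations make the greedy vertex cover depend on $\prec$ in intricate ways, and the specific choice of $I$ must respect both $\sigma$ and the ordering so that the above domination property holds uniformly across all non-covers $\sigma$. Local graph data seems insufficient to pin down a suitable ordering; an inductive, globally aware construction is likely required, and verifying its correctness is the technical heart of the proof.
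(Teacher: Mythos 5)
Your proposal does not address the statement at hand. Theorem~\ref{thm:minimal_exclusion} is the Matou\v{s}ek--Tancer result that \emph{any} simplicial complex $X$ with \emph{any} linear ordering $\prec$ of its facets is $d_{\prec}(X)$-collapsible; a proof of it must produce an explicit sequence of elementary $d$-collapses (the standard argument processes the facets $\sigma_1,\dots,\sigma_m$ in order and, at stage $k$, collapses away the faces whose minimal containing facet is $\sigma_k$, using $M_{\prec}(\sigma)$ as the free face paired with $\sigma$). Your write-up instead \emph{assumes} Theorem~\ref{thm:minimal_exclusion} and sketches how to apply it to $\NC(G)$ to derive the paper's main result, Theorem~\ref{thm:main:collapse}. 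That is a different statement, so as a proof of the theorem you were asked about, everything is missing. (In the paper this theorem is quoted from \cite{MT09} without proof, but a blind proof attempt should still supply the collapsing argument rather than an application.)

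Even read as a sketch of Theorem~\ref{thm:main:collapse}, the proposal stops short of the actual content and overestimates the difficulty of the ordering. The paper does not need a delicately constructed global vertex ordering: it fixes one maximal independent set $I$ with $\gamma(G;I)=\ir(G)$, places its vertices last, orders edges by comparing larger endpoints first, and then bounds $|M_{\prec}(\sigma)|$ directly by splitting $M_{\prec}(\sigma)$ into $\sigma\cap\overline{I}$ and the part of $\sigma\cap I$ adjacent to $\overline{\sigma}\cap\overline{I}$, using the combinatorial inequality $|S|-|N(S)\cap I|\ge \ir(G)-|I|$ for $S\subseteq\overline{I}$ (Claim~\ref{clm}) plus a reduction to the case $\beta(\sigma)\ge 1$ via Claim~\ref{lem:mes}. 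Your plan of extracting an independent set from $V(G)\setminus M_{\prec}(\sigma)$ and a ``globally aware'' recursive vertex ordering is not what is needed and is left entirely unexecuted, so there is no verifiable argument in either direction.
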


\subsection{Proof of Theorem~\ref{thm:main:collapse}}
Let $G$ be a 
graph without isolated vertices.
For simplicity, assume $V(G)=[n]$ and denote $\overline{S}:=[n]\setminus S$ for $S\subseteq [n]$.
Let $I$ be an independent set of $G$ such that $\gamma(G;I)=\ir(G)$.
Let $|I|=i$.
We may assume that $I$ is a maximal independent set and $I:=[n]\setminus [n-i]$.

Note that every facet of $\NC(G)$ is the complement of an edge of $G$.
We define a linear ordering $\prec$ of the facets of $\NC(G)$ as follows.
For two edges $a_1b_1$ and $a_2b_2$, where $a_i< b_i$ for $i\in[2]$,
we denote $a_1b_1 <_{L} a_2b_2$ if either (i) $b_1<b_2$ or (ii) $b_1=b_2$ and $a_1<a_2$.
For two distinct facets $\sigma$ and $\tau$ of $\NC(G)$, we denote $\sigma \prec \tau$ if $\overline{\sigma}<_{L}\overline{\tau}$. 

\begin{clm}\label{lem:mes}
For $\sigma, \sigma' \in \NC(G)$, if $\overline{\sigma}\cap \overline{I}=\overline{\sigma'}\cap \overline{I}$ and $G[\overline{\sigma}\cap \overline{I}]$ contains an edge,
then $\mes_{\prec}(\sigma)=\mes_{\prec}({\sigma'})$.
\end{clm}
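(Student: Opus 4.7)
The plan is to exploit the design of the order $<_L$, which compares edges primarily by their larger endpoint. Because $\overline{I}=[n-i]$ consists of the $n-i$ smallest vertices, every edge with both endpoints in $\overline{I}$ is strictly $<_L$-smaller than every edge incident to a vertex of $I$. The hypothesis gives an edge $e_0\subseteq\overline{\sigma}\cap\overline{I}$, so if $e^*$ denotes the $<_L$-minimum edge of $G$ contained in $\overline{\sigma}$, then $e^*\leq_L e_0$ forces the larger endpoint of $e^*$ to be at most $n-i$, and hence $e^*\subseteq\overline{\sigma}\cap\overline{I}$. Since $e^*$ is determined by $\overline{\sigma}\cap\overline{I}$ alone and the $\prec$-first facet of $\NC(G)$ containing $\sigma$ is $\overline{e^*}$, both this facet and its position $\ell$ in $\prec$ agree for $\sigma$ and $\sigma'$. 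By the same argument applied to $e^*$ in place of $e_0$, every strictly earlier facet $\sigma_k=\overline{e_k}$ (with $k<\ell$) satisfies $e_k\subseteq\overline{I}$.

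With this structural fact in hand, I would compute $\mes_\prec(\sigma)$ and verify that each ingredient depends on $\sigma$ only through $\overline{\sigma}\cap\overline{I}$. The key identity is
\[
\sigma\setminus\sigma_k \;=\; \sigma\cap e_k \;=\; (\sigma\cap\overline{I})\cap e_k \;=\; \bigl(\overline{I}\setminus(\overline{\sigma}\cap\overline{I})\bigr)\cap e_k,
\]
which uses $e_k\subseteq\overline{I}$ and exhibits $\sigma\setminus\sigma_k$ as a function of $\overline{\sigma}\cap\overline{I}$ only. An induction on $k$ then handles the sequence: for $k=1$, $v_1=\min(\sigma\setminus\sigma_1)$ is determined by the identity, and for $k\geq2$, both branches of the defining formula for $v_k$ use only $\sigma\setminus\sigma_k$ together with the already-determined values $v_1,\ldots,v_{k-1}$. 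Combined with $\ell(\sigma)=\ell(\sigma')$, this yields $\mes_\prec(\sigma)=\mes_\prec(\sigma')$, as desired.

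The step I expect to be the only mildly delicate one is the initial structural observation, namely that the existence of a single edge in $G[\overline{\sigma}\cap\overline{I}]$ is enough to force $e^*$ — and consequently every earlier edge $e_k$ — to lie inside $\overline{I}$. Once this is set up, the computation of the minimal exclusion sequence is a routine induction, and I do not foresee further obstacles; the remaining verification amounts to reading off the definitions.
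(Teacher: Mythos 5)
Your proposal is correct and follows essentially the same route as the paper: you identify that the $<_L$-minimum edge contained in $\overline{\sigma}$ and all $<_L$-earlier edges lie inside $\overline{I}$, deduce that $\sigma\setminus\sigma_k$ depends only on $\overline{\sigma}\cap\overline{I}$ for the relevant facets, and conclude by induction on the entries of the sequence. (You also state the ordering fact in the correct direction --- edges of $G[\overline{I}]$ precede edges meeting $I$ --- whereas the paper's proof asserts it in reversed form, an apparent typo that its subsequent reasoning does not actually use.)
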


\begin{proof}
Let $j$ be the length of $\mes_{\prec}(\sigma)$.
Note that an edge between $I$ and $\overline{I}$ comes before all the edges of $G[\overline{I}]$ in the linear ordering $<_L$.
Since $G[\overline{\sigma}\cap \overline{I}]$ has an edge, 
for the $(j+1)$th facet $\sigma_{j+1}$, $\overline{\sigma_{j+1}}$ is an edge such that $\overline{\sigma_{j+1}}\subseteq \overline{I}$.
By the definition of $\prec$, it also follows that 
for every $k \in [j+1]$, the $k$th facet $\sigma_k$ satisfies $\overline{\sigma_k}\subseteq \overline{I}$.  
Clearly, $ \sigma \cap \overline{I}= \sigma' \cap \overline{I}$.
Thus, we have
\[\overline{\sigma_k} \cap \sigma=\overline{\sigma_k} \cap \sigma \cap \overline{I}=\overline{\sigma_k} \cap \sigma'\cap \overline{I}=\overline{\sigma_k} \cap \sigma'.\]
Thus the length of $\mes_{\prec}(\sigma')$ is also $j$ and for every $k \in [j]$,
the $k$th entry of $\mes_{\prec}(\sigma)$ is equal to that of $\mes_{\prec}(\sigma')$.
\end{proof}

\begin{clm}\label{clm}
For every $S\subseteq \overline{I}$,
\[
|S|-|N(S)\cap I| \geq \ir(G)-|I|,
\]
where $N(S)=\{v \in V(G) : uv \in E(G) \text{ for some }u \in S\}$.
\end{clm}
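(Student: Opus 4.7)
The plan is to rewrite the desired inequality in the equivalent form
\[
|S| + |I\setminus N(S)| \;\geq\; \ir(G),
\]
and then produce an explicit set of size at most $|S| + |I\setminus N(S)|$ that dominates $I$. Since $\gamma(G;I) = \ir(G)$ by the choice of $I$, any set dominating $I$ has size at least $\ir(G)$, and this will close the argument.

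For the construction, observe that every vertex of $N(S)\cap I$ already has, by definition, a neighbor in $S$, so $S$ alone dominates $N(S)\cap I$. It remains to dominate the leftover set $I\setminus N(S)$. Here I would use the hypothesis that $G$ has no isolated vertices: for each $v\in I\setminus N(S)$, pick any neighbor $u_v\in V(G)$ (which necessarily lies in $\overline{I}$, since $I$ is independent). Setting
\[
D \;:=\; S \cup \{u_v : v\in I\setminus N(S)\},
\]
every vertex of $I$ then has a neighbor in $D$: vertices in $N(S)\cap I$ via $S$, and vertices in $I\setminus N(S)$ via the assigned $u_v$. Thus $D$ dominates $I$ and clearly $|D|\leq |S|+|I\setminus N(S)|$.

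Combining, $\ir(G)=\gamma(G;I)\leq |D|\leq |S|+|I\setminus N(S)| = |S|+|I|-|N(S)\cap I|$, which rearranges exactly to the claimed inequality $|S|-|N(S)\cap I|\geq \ir(G)-|I|$. There is no real obstacle in the argument; the only point requiring care is that the notion of domination here is the open one (each $v\in I$ needs an actual neighbor in $D$, not merely membership), which is why the no-isolated-vertices hypothesis enters to guarantee the existence of $u_v$.
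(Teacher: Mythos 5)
Your proof is correct, and it takes a genuinely different (and more direct) route than the paper. You augment an arbitrary $S$ to a set $D = S \cup \{u_v : v \in I\setminus N(S)\}$ that dominates $I$, then invoke $\gamma(G;I)=\ir(G) \le |D| \le |S| + |I| - |N(S)\cap I|$. The paper instead runs an extremal argument: it picks $S$ minimizing $|S|-|N(S)\cap I|$ (with $|S|$ maximum subject to that), and shows via exchange steps that for this extremal $S$ one must have $N(S)\cap I = I$, so that $S$ itself dominates $I$ and $|S|\ge \ir(G)$; minimality then transfers the bound to all $S$. Both arguments bottom out at the same two facts — any set dominating $I$ has size at least $\gamma(G;I)=\ir(G)$, and the no-isolated-vertices hypothesis supplies a neighbor for each vertex of $I\setminus N(S)$ — but your construction reaches the conclusion in one step for every $S$ simultaneously, making the ``defect'' interpretation of $|S|+|I\setminus N(S)|$ transparent, whereas the paper's version requires the extremal choice and a small case analysis. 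One cosmetic remark: your parenthetical that $u_v\in\overline{I}$ (by independence of $I$) is true but not needed, since the definition of $\gamma(G;I)$ places no restriction on where the dominating set lives.
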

\begin{proof}
We take $S \subseteq \overline{I}$ so that (1) $|S|-|N(S)\cap I|$ is minimum, and (2) $|S|$ is maximum subject to (1).
By the minimality of $|S|-|N(S)\cap I|$,
every element in $\overline{I}\setminus S$ has at most one neighbor in $I\setminus N(S)$.
If some $v\in \overline{I}\setminus S$ has exactly one neighbor $w$ in $I\setminus N(S)$, then for $T=S\cup\{w\}\subseteq \overline{I}$, we know $|T|-|N(T)\cap I|=|S|-|N(S)\cap I|$ and $|T|>|S|$, which is a contradiction to the maximality of $|S|$.
Thus, every element in $\overline{I}\setminus S$ does not have a neighbor in $I\setminus N(S)$.
Since $G$ has no isolated vertex,  we conclude $N(S) \cap I=I$.
Hence, $S$ dominates $I$ and so $|S| \geq \ir(G)$.
Thus $
|S|-|N(S)\cap I| \geq \ir(G)-|I|$.
\end{proof}

By Theorem~\ref{thm:minimal_exclusion}, it is sufficient to show that 
\begin{eqnarray}\label{eq:M}
&&|M_{\prec}(\sigma)|\le |V(G)|-\ir(G)-1\quad \text{ for every  }\sigma \in \NC(G).
\end{eqnarray}

For a face $\sigma \in \NC(G)$, let $\beta(\sigma)=|N(\overline{\sigma}\cap \overline{I})\cap \overline{\sigma} \cap I|$.
Suppose that $\beta(\sigma)=0$. 
Then $G[\overline{\sigma} \cap \overline{I}]$ must have an edge.
Consider $\sigma'=\sigma\cup I$.
Then $\overline{\sigma}\cap \overline{I}=\overline{\sigma'}\cap \overline{I}$.
By Claim~\ref{lem:mes}, $\mes_{\prec}(\sigma)=\mes_{\prec}(\sigma')$ and therefore,
$M_{\prec}(\sigma)=M_{\prec}(\sigma')$.
On the other hand, we know $\beta(\sigma')\ge 1$ by the definition of $\sigma'$.
Thus, it is sufficient check \eqref{eq:M} under the assumption $\beta(\sigma)\ge 1$.

Note that for $v \in \sigma \cap I$, if $v \in M_{\prec}(\sigma)$, then $v$ is a neighbor of some vertex in $\overline{\sigma} \cap \overline{I}$.
Thus,
\begin{eqnarray*}\label{eq:mes}
|M_{\prec}(\sigma)| &\le& |\sigma \cap \overline{I}|  + |N(\overline{\sigma}\cap \overline{I})\cap (\sigma \cap I)|\\
&=& |\overline{I}|-|\overline{\sigma}\cap \overline{I}| +
|N(\overline{\sigma}\cap \overline{I})\cap I|-\beta(\sigma)\\ 
 &\le &|\overline{I}|-\ir(G)+|I|-\beta(\sigma)\\
 &=&|V(G)|-\ir(G)-\beta(\sigma),
\end{eqnarray*}
where the last inequality holds by applying Claim~\ref{clm} to the set $\overline{\sigma}\cap \overline{I}$.
As we assumed that $\beta(\sigma)\ge 1$, \eqref{eq:M} follows, and this concludes the proof of Theorem~\ref{thm:main:collapse}.

\section{Concluding remarks}

For a graph $G$ and $A,W \subseteq V(G)$, if each $w\in A$ has a neighbor in $W$ or $w\in W$, then we say $W$ {\em weakly dominates} $A$.
We use $\gamma_w(G;A)$ to denote the minimum size of a set that weakly dominates $A$.
The \textit{weak independent domination number} $\ir_w(G)$ of $G$ is defined as
\[
\ir_w(G):=\max \{\gamma_w(G; I) : I \text{ is an independent set of }G\}.
\]
The following is a straightforward application of Theorem~\ref{thm:main:collapse}.

\begin{cor}
For a graph $G$, the non-cover complex of $G$ is $(|V(G)|-\ir_w(G)-1)$-collapsible.
\end{cor}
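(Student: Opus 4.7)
The plan is to reduce the corollary to Theorem~\ref{thm:main:collapse} via two elementary observations. First, any dominating set of an independent set $I$ is automatically a weakly dominating set of $I$, so $\gamma_w(G;I)\le\gamma(G;I)$ for every independent $I$, whence $\ir_w(G)\le\ir(G)$ (using the paper's convention $\ir(G)=\infty$ when $G$ has an isolated vertex). Second, every elementary $d$-collapse is in particular an elementary $d'$-collapse for $d'\ge d$, so $d$-collapsibility is monotone in $d$. Combining these two facts settles the corollary immediately when $G$ has no isolated vertex: Theorem~\ref{thm:main:collapse} gives $(|V(G)|-\ir(G)-1)$-collapsibility of $\NC(G)$, and since $|V(G)|-\ir_w(G)-1\ge|V(G)|-\ir(G)-1$, the conclusion follows.

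For the remaining case in which $G$ has at least one isolated vertex, I would reduce to the previous case by a cone argument. If $G$ has no edges at all, then $\NC(G)$ is the void complex and there is nothing to prove. Otherwise, write $V(G)=V(H)\cup\{v_1,\dots,v_k\}$ where the $v_i$ are the isolated vertices and $H$ is the induced subgraph on the non-isolated vertices, so that $H$ has no isolated vertex and contains an edge. Since the $v_i$ cover no edges, a set $W\subseteq V(G)$ lies in $\NC(G)$ if and only if $W\cap V(H)\in\NC(H)$; this says exactly that $\NC(G)$ is the iterated cone $v_1\ast\cdots\ast v_k\ast\NC(H)$. On the parameter side, since each isolated vertex $v_i$ forces itself into any set that weakly dominates an independent set containing it, one checks directly that $\ir_w(G)=\ir_w(H)+k$, and hence $|V(G)|-\ir_w(G)-1=|V(H)|-\ir_w(H)-1$.

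The last ingredient I would need is a short lemma that $d$-collapsibility is preserved under taking cones: if $X$ is $d$-collapsible then so is $v\ast X$ for any new vertex $v$. I would prove this by running the given $d$-collapsing sequence of $X$ verbatim inside $v\ast X$. The key observations are that the facets of $v\ast X$ are exactly $F\cup\{v\}$ for facets $F$ of $X$, so a face $\sigma\subseteq V(X)$ is free in $v\ast X$ if and only if it is free in $X$; and that performing the elementary collapse at such a $\sigma$ takes a cone $v\ast X_i$ to the cone $v\ast X_{i+1}$, terminating at $v\ast\emptyset=\emptyset$. Iterating this lemma $k$ times and combining with the $(|V(H)|-\ir_w(H)-1)$-collapsibility of $\NC(H)$ (obtained from the first paragraph applied to $H$) then yields the desired $(|V(G)|-\ir_w(G)-1)$-collapsibility of $\NC(G)$. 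The only step that requires actual verification is the cone lemma, and even there the argument is pure bookkeeping of free faces; everything else follows directly from the definitions and Theorem~\ref{thm:main:collapse}.
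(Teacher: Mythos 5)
Your proposal is correct and follows essentially the same route as the paper: reduce to Theorem~\ref{thm:main:collapse} by observing that $\NC(G)$ is an iterated cone over $\NC(H)$ with the isolated vertices as apexes and that $\ir_w(G)=\ir_w(H)+k$. The only cosmetic differences are that you prove the cone-preservation of $d$-collapsibility in detail (which the paper asserts) and use $\ir_w(H)\le\ir(H)$ together with monotonicity of $d$-collapsibility in $d$ where the paper uses the equality $\ir_w(H)=\ir(H)$; both are fine.
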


\begin{proof}
If $G$ has no isolated vertex, then $\ir_w(G)=\ir(G)$ and we are done by Theorem~\ref{thm:main:collapse}.
Assume $G$ has $k$ isolated vertices for some integer $k \geq 1$. 
Let $W$ be the set of isolated vertices of $G$, and let $G'$ be the graph obtained from $G$ by removing all vertices in $W$.

Recall that $\NC(G)$ is a cone with apex $v$ if $v$ is an isolated vertex of $G$.
Thus $\NC(G)$ is $d$-collapsible if and only if the subcomplex of $\NC(G)$ induced by $V(G) \setminus \{v\}$ is $d$-collapsible.
Moreover, since the subcomplex of $\NC(G)$ induced by $V(G) \setminus W$ is equal to $\NC(G')$, it follows that $\NC(G)$ is $d$-collapsible if and only if $\NC(G')$ is $d$-collapsible.
Thus, it is sufficient to show $\NC(G')$ is $(|V(G)|-\ir_w(G)-1)$-collapsible.
By Theorem~\ref{thm:main:collapse}, $\NC(G')$ is $(|V(G')|-\ir(G')-1)$-collapsible.
Since $|V(G')|=|V(G)|-k$ and $\ir_w(G)=\ir(G')+k$, we obtain $|V(G')|-\ir(G')-1=|V(G)|-\ir_w(G)-1$.
\end{proof}

We finish the section by stating a direct consequence of the topological colorful Helly theorem~\cite{KM05} from our main result.

\begin{cor}\label{r-cover-coll}
Let $G$ be a graph on $n$ vertices and let $W_1,\dots,W_{n-\ir(G)} \subseteq V(G)$.
Assume that every set $A \subseteq V(G)$ satisfying the following two conditions is a cover of $G$:
\begin{enumerate}[(i)]
\item $A \cap W_i \ne \emptyset$ for $i \in [n-\ir(G)]$.
\item $W_j \subseteq A$ for some $j \in [n-\ir(G)]$.
\end{enumerate}
Then there is a cover $W$ of $G$ where $W=\{w_{i_1},\dots,w_{i_k}\}$ with $1 \leq i_1 <\cdots <i_k \leq n-\ir(G)$ and $w_{i_j} \in W_{i_j}$ for each $j \in [k]$.
\end{cor}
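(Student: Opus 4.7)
The plan is to deduce the corollary as a direct application of Theorem~\ref{thm:main:collapse} together with the topological colorful Helly theorem of~\cite{KM05}. Setting $d = n - \ir(G) - 1$, Theorem~\ref{thm:main:collapse} yields that $\NC(G)$ is $d$-collapsible, and we are handed exactly $d + 1 = n - \ir(G)$ sets $W_1, \dots, W_{d+1}$, which is the correct input size for a colorful Helly-type statement.

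I would argue by contradiction: assume that no rainbow cover of $G$ for $W_1, \dots, W_{d+1}$ exists, so that every rainbow transversal $\{w_{i_1}, \dots, w_{i_k}\}$ lies in $\NC(G)$. Applying the Kalai-Meshulam theorem in its sharp form to the $d$-collapsible complex $\NC(G)$ and the sets $W_1, \dots, W_{d+1}$ then produces an index $j \in [d+1]$ and a face $\sigma \in \NC(G)$ with $W_j \subseteq \sigma$ and $\sigma \cap W_i \ne \emptyset$ for every $i \in [d+1]$. Such a $\sigma$ satisfies both conditions (i) and (ii) of the corollary, so by the standing assumption $\sigma$ must be a cover, i.e.\ $\sigma \notin \NC(G)$, contradicting $\sigma \in \NC(G)$. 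Hence a rainbow cover must exist.

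The only delicate step is selecting the right form of the Kalai-Meshulam theorem. The weak version recalled in the introduction presumes outright that each $W_i$ is already a cover, which is strictly stronger than the hypothesis of the corollary; for instance, a $W_i$ that fails to meet some other $W_{i'}$ is not constrained by conditions (i) and (ii) at all, so it need not individually be a cover. One must therefore appeal to the refinement in \cite{KM05} whose alternative conclusion, in the absence of a rainbow transversal outside $\NC(G)$, is the existence of a face of $\NC(G)$ containing some $W_j$ and simultaneously meeting every $W_i$. Conditions (i) and (ii) of the corollary were tailored to match this alternative on the nose, after which the deduction is immediate.
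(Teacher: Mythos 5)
Your proposal is correct and is exactly the argument the paper intends: the paper offers no written proof for this corollary beyond calling it a direct consequence of Theorem~\ref{thm:main:collapse} and the topological colorful Helly theorem of \cite{KM05}, and your contrapositive application of the collapsibility version of that theorem (yielding a face of $\NC(G)$ meeting every $W_i$ and containing some $W_j$) is the intended deduction. Your observation that the weaker Leray-type formulation quoted in the introduction does not suffice, and that conditions (i) and (ii) are tailored to the sharper conclusion available for $d$-collapsible complexes, correctly identifies the one point of substance.
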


Dao and Schweig \cite{DS13} showed a weaker version of Theorem~\ref{thm:main:collapse} concerning a topological property known as ``Lerayness" via an algebraic approach.
Let us briefly introduce their result. 
For a simplicial complex $X$, we say $X$ is {\it $d$-Leray} if $\tilde{H}_i(Y)=0$ for all induced subcomplexes $Y$ of $X$ and all integers $i \geq d$.
Wegner showed that $d$-collapsiblity implies $d$-Lerayness\cite{Weg75}, yet the converse is not always true\cite{MT09}.
Hochster \cite{Hochster} proved the relation between the Leray number\footnote{For a simplicial complex $X$, the {\it Leray number} of $X$ is the minimum integer $k$ such that $X$ is $k$-Leray.} and the Castelnuovo-Mumford regularity of the Stanley-Reisner ideal of a simplicial complex.
From this relationship and the result in \cite{DS13}, it was shown that for a graph $G$, the non-cover complex $\NC(G)$ is $(|V(G)|-\ir(G)-1)$-Leray.
There is an active line of research in this direction, see \cite{KM06, Woodroofe} for more details.  
By applying the topological colorful Helly theorem of the Lerayness version, we obtain the following:

\begin{cor}\label{r-cover-leray}
Let $G$ be a graph on $n$ vertices.
For every $n-\ir(G)$ covers $W_1,\dots,W_{n-\ir(G)}$ of $G$, 
there is a cover $W$ of $G$ where $W=\{w_{i_1},\dots,w_{i_k}\}$ with $1 \leq i_1 <\cdots <i_k \leq n-\ir(G)$ and $w_{i_j} \in W_{i_j}$ for each $j \in [k]$.
\end{cor}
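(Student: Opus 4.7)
The plan is to derive the corollary by combining the $(n-\ir(G)-1)$-Lerayness of $\NC(G)$ with the topological colorful Helly theorem of Kalai--Meshulam~\cite{KM05} in its Leray formulation.

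First I would record that $\NC(G)$ is $(n-\ir(G)-1)$-Leray. This is immediate from Theorem~\ref{thm:main:collapse} together with Wegner's result that $d$-collapsibility implies $d$-Lerayness; alternatively, one could cite the algebraic argument of Dao--Schweig~\cite{DS13} mentioned above, which is exactly a Lerayness statement in the same range and suffices for the corollary even without the full strength of Theorem~\ref{thm:main:collapse}.

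Next I would translate the hypothesis into the language of $\NC(G)$: by the very definition of the non-cover complex, a subset $W\subseteq V(G)$ is a cover of $G$ if and only if $W\notin \NC(G)$. In particular, the $n-\ir(G)$ covers $W_1,\dots,W_{n-\ir(G)}$ constitute exactly $d+1$ non-faces of $\NC(G)$ for $d=n-\ir(G)-1$, which is the correct input size for the Helly-type statement.

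Finally I would invoke the Leray version of the topological colorful Helly theorem: given a $d$-Leray complex $X$ on $V$ and any $d+1$ subsets $V_1,\dots,V_{d+1}\subseteq V$ that are each non-faces of $X$, there exist indices $1\le i_1<\cdots<i_k\le d+1$ and vertices $v_{i_j}\in V_{i_j}$ for each $j\in[k]$ such that $\{v_{i_1},\dots,v_{i_k}\}$ is itself a non-face of $X$. Applied to $X=\NC(G)$ and $V_i=W_i$, this produces a set $W$ that is not a face of $\NC(G)$, hence a rainbow cover of $G$ of precisely the form claimed. No significant obstacle is anticipated: the entire argument is a direct application of a standard topological result and runs parallel to the derivation of Corollary~\ref{r-cover-coll} from Theorem~\ref{thm:main:collapse}, the only substantive ingredient being the Lerayness bound, which is already in hand.
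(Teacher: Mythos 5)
Your proposal is correct and follows exactly the route the paper takes (indeed, the paper states this corollary with no written proof beyond the remark that it follows from the Leray version of the topological colorful Helly theorem applied to the $(n-\ir(G)-1)$-Leray complex $\NC(G)$). Your accounting is right: covers are precisely the non-faces of $\NC(G)$, and $n-\ir(G)=d+1$ for $d=n-\ir(G)-1$, so the Kalai--Meshulam theorem yields the rainbow cover directly.
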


Note that Corollary~\ref{r-cover-leray} is weaker than Corollary~\ref{r-cover-coll}, since if we have $n-\ir(G)$ covers for a graph $G$, then a set $A \subseteq V(G)$ satisfying (ii) is a cover of $G$.
As mentioned in the introduction, the set $W$ in Corollary~\ref{r-cover-coll} and~\ref{r-cover-leray} is also known as a {\em rainbow cover} of $G$ for $W_1,\dots,W_{n-\ir(G)}$. 
The following example demonstrates that Corollaries~\ref{r-cover-coll} and~\ref{r-cover-leray} are tight.

\begin{example}\rm
Let $C_{3k}$ be a cycle of length $3k$ for an integer $k \geq 2$.
It is easy to verify $\ir(C_{3k})=k$ and so $|V(C_{3k})|-\ir(C_{3k})=2k$.
Consider $M \subseteq V(C_{3k})$ that induces a matching of size $k$, so that $M$ is a cover of $C_{3k}$.
Let $W_i=M$ for all $i \in [2k-1]$.
It is again easy to verify that there is no rainbow cover with respect to $W_1,\dots,W_{2k-1}$.
\end{example}

\section*{Acknowledgements}
The authors thank professor Ron Aharoni for introducing the problem to the second author.
This work was done during the 4th Korean Early Career Researcher Workshop in Combinatorics.

\bibliographystyle{plain}

\begin{thebibliography}{10}

\bibitem{Aharoni}
R.    Aharoni.
\newblock personal communication.

\bibitem{AH00}
R. Aharoni and P. Haxell.
\newblock Hall's theorem for hypergraphs.
\newblock {\em J. Graph Theory}, 35(2):83--88, 2000.

\bibitem{BBM97}
A. Bj\"{o}rner, L. M. Butler, and A. O. Matveev.
\newblock Note on a combinatorial application of {A}lexander duality.
\newblock {\em J. Combin. Theory Ser. A}, 80(1):163--165, 1997.

\bibitem{Chudnovsky}
M. Chudnovsky.
\newblock Systems of disjoint representatives.
\newblock M.Sc. Thesis, Technion, Haifa, 2000.

\bibitem{DS13}
H. Dao and J. Schweig.
\newblock Projective dimension, graph domination parameters, and independence
  complex homology.
\newblock {\em J. Combin. Theory Ser. A}, 120(2):453--469, 2013.

\bibitem{Hochster}
M. Hochster.
\newblock Cohen-Macaulay rings, combinatorics, and simplicial complexes, in Ring theory, II.
\newblock {\em Lecture Notes in Pure and Appl. Math.}, 26:171--223, 1977.

\bibitem{Kalai}
G. Kalai.
\newblock Intersection patterns of convex sets.
\newblock {\em Israel J. Math.}, 48(2-3):161--174, 1984.

\bibitem{KM05}
G. Kalai and R. Meshulam.
\newblock A topological colorful {H}elly theorem.
\newblock {\em Adv. Math.}, 191(2):305--311, 2005.

\bibitem{KM06}
G. Kalai and R. Meshulam.
\newblock Intersections of {L}eray complexes and regularity of monomial ideals.
\newblock {\em J. Combin. Theory Ser. A}, 113(7):1586--1592, 2006.

\bibitem{Kim}
J.  {Kim}.
\newblock {Collapsibility of noncover complexes of chordal graphs}.
\newblock {\em arXiv e-prints}, page arXiv:1904.04519, Apr 2019.

\bibitem{Lew}
A.  {Lew}.
\newblock {Collapsibility of simplicial complexes of hypergraphs}.
\newblock {\em arXiv e-prints}, page arXiv:1810.11802, Oct 2018.

\bibitem{MT09}
J.  Matou\v{s}ek and M.  Tancer.
\newblock Dimension gaps between representability and collapsibility.
\newblock {\em Discrete Comput. Geom.}, 42(4):631--639, 2009.

\bibitem{Mes01}
R.  Meshulam.
\newblock The clique complex and hypergraph matching.
\newblock {\em Combinatorica}, 21(1):89--94, 2001.

\bibitem{Mes03}
R.  Meshulam.
\newblock Domination numbers and homology.
\newblock {\em J. Combin. Theory Ser. A}, 102(2):321--330, 2003.

\bibitem{Weg75}
G.  Wegner.
\newblock {$d$}-collapsing and nerves of families of convex sets.
\newblock {\em Arch. Math. (Basel)}, 26:317--321, 1975.

\bibitem{Woodroofe}
R. Woodroofe.
\newblock Matchings, coverings, and Castelnuovo-Mumford regularity.
\newblock {\em J. Commut. Algebra}, 6(2):287--304, 2014.

\end{thebibliography}

\end{document}